\newcommand{\bburl}[1]{\textcolor{blue}{\url{#1}}}
\newcommand{\be}{\begin{equation}}
\newcommand{\ee}{\end{equation}}
\newcommand{\bea}{\begin{eqnarray}}
\newcommand{\eea}{\end{eqnarray}}
\newtheorem{thm}{Theorem}[section]
\newtheorem{lem}[thm]{Lemma}
\newtheorem{rek}[thm]{Remark}
\numberwithin{equation}{section}
\begin{document}

\title{On Generalized Zeckendorf Decompositions and Generalized Golden Strings}

\author{H\`ung Vi\d{\^e}t Chu}
\email{\textcolor{blue}{\href{mailto:hungchu2@illinois.edu}{hungchu2@illinois.edu}}}
\address{Department of Mathematics, University of Illinois at Urbana-Champaign, Urbana, IL 61820, USA}

\subjclass[2010]{11B39}

\keywords{Zeckendorf decomposition, fixed term, Fibonacci}

\thanks{}

\date{\today}
\maketitle

\begin{abstract}
Zeckendorf proved that every positive integer has a unique representation as a sum of non-consecutive Fibonacci numbers. A natural generalization of this theorem is to look at the sequence defined as follows: for $n\ge 2$, let $F_{n,1} = F_{n,2} = \cdots = F_{n,n} = 1$ and $F_{n, m+1} = F_{n, m} + F_{n, m+1-n}$ for all $m\ge n$. It is known that every positive integer has a unique representation as a sum of $F_{n,m}$'s where the indexes of summands are at least $n$ apart. We call this the $n$-decomposition. Griffiths showed an interesting relationship between the Zeckendorf decomposition and the golden string. In this paper, we continue the work to show a relationship between the $n$-decomposition and the generalized golden string. 
\end{abstract}

\section{Introduction} We define the Fibonacci sequence to be $F_1 = 1$, $F_2 = 1$, and $F_m = F_{m-1}+F_{m-2}$ for $m\ge 3$. The Fibonacci numbers have fascinated mathematicians for centuries with many interesting properties. 
A beautiful theorem of Zeckendorf \cite{Z} states that every positive integer $m$ can be uniquely written as a sum of non-consecutive Fibonacci numbers. This gives the so-called Zeckendorf decomposition of $m$. A more formal statement of Zeckendorf's theorem is as follows:
\begin{thm}\label{p1}
For any $m\in\mathbb{N}$, there exists a unique increasing sequence of positive integers $(c_1, c_2, \ldots, c_k)$ such that $c_1\ge 2$, $c_i\ge c_{i-1}+2$ for $i = 2, 3, \ldots, k$, and $m = \sum_{i=1}^kF_{c_i}$. 
\end{thm}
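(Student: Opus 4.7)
The plan is to prove existence and uniqueness separately, both by induction on $m$, leaning on the Fibonacci recurrence $F_m = F_{m-1} + F_{m-2}$.

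For existence I would argue greedily by strong induction on $m$. Given $m \geq 1$, let $c \geq 2$ be the largest index with $F_c \leq m$ (well-defined since $(F_c)_{c \geq 2}$ is strictly increasing and unbounded), and set $m' = m - F_c$. The key estimate is $m' < F_{c-1}$: otherwise $m \geq F_c + F_{c-1} = F_{c+1}$, contradicting maximality of $c$. If $m' = 0$ we are done with the singleton decomposition $(c)$; otherwise $m' < m$ and by the inductive hypothesis $m'$ has a Zeckendorf decomposition whose largest index $c_{k-1}$ satisfies $F_{c_{k-1}} \leq m' < F_{c-1}$, forcing $c_{k-1} \leq c - 2$. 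Appending $c$ produces the desired decomposition of $m$.

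For uniqueness I would first establish the bound
\be
\sum_{i=1}^{k} F_{c_i} \;\leq\; F_{c_k + 1} - 1
\ee
for every admissible sequence $2 \leq c_1 < c_2 < \cdots < c_k$ with gaps $c_i - c_{i-1} \geq 2$. This follows by induction on $c_k$, or equivalently by noting that the maximal admissible sum with largest index $c_k$ is realized by $c_k, c_k - 2, c_k - 4, \ldots$ and telescopes via $F_j + F_{j+2} + \cdots + F_{c_k} = F_{c_k + 1} - F_{j-1}$. Now suppose $m$ had two distinct Zeckendorf decompositions. After cancelling indices that appear in both, we may assume the two index sets are disjoint; let $c_k$ and $d_\ell$ be the largest index in each, say $c_k > d_\ell$. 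Then
\be
m \;=\; \sum_{i=1}^{\ell} F_{d_i} \;\leq\; F_{d_\ell + 1} - 1 \;<\; F_{c_k} \;\leq\; m,
\ee
a contradiction.

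The main obstacle will be the uniqueness step: the greedy existence argument is essentially a direct computation, but uniqueness requires both the strict inequality $\sum F_{c_i} < F_{c_k + 1}$ and the cancellation-of-common-indices reduction. Once those two ingredients are in hand, the contradiction above is immediate, and the same template will guide the generalizations developed later in the paper.
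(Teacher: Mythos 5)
Your proposal is correct, but there is nothing in the paper to compare it against: Theorem \ref{p1} is stated as Zeckendorf's classical result with a citation to \cite{Z} and is not proved in the paper (just as its generalization, Theorem \ref{p2}, is attributed to the proof of \cite[Theorem 1.3]{DDKMMV}). What you have written is the standard argument: greedy choice of the largest $F_c\le m$ for existence, with the estimate $m-F_c<F_{c-1}$ guaranteeing the gap condition, and for uniqueness the bound $\sum_{i=1}^k F_{c_i}\le F_{c_k+1}-1$ obtained from the telescoping identity $F_j+F_{j+2}+\cdots+F_{c_k}=F_{c_k+1}-F_{j-1}$ together with the reduction to disjoint index sets. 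Both halves check out, including the edge cases ($c=2$ forces $m=1$, and $F_{c_{k-1}}<F_{c-1}$ does yield $c_{k-1}\le c-2$ because the $F_j$ are strictly increasing for $j\ge 2$). Two small points to tighten in a full write-up: after cancelling common indices the two remaining sums equal $m$ minus the cancelled part rather than $m$ itself (the displayed chain of inequalities should be phrased for that common value; the contradiction is unaffected), and you should observe that neither reduced index set can be empty while the other is nonempty, since a nonempty admissible sum is at least $1$.
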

Much work has been done to understand the structure of Zeckendorf decompositions and their applications (see \cite{BDEMMTW1, BDEMMTW2, B, CSH, MG1, MG2, HS, L}) and to generalize them (see \cite{DDKMMV, DFFHMPP, Ho, K, ML}). Before stating our main results, we mention several related results from the literature. Given $n\in \mathbb{N}_{\ge 2}$, we define the sequence $F_{n,1} = \cdots = F_{n,n} = 1$ and $F_{n,m+1} = F_{n,m} + F_{n,m+1-n}$ for all $m\ge n$. The following theorem, which is a generalization of Theorem \ref{p1}, follows immediately from the proof of \cite[Theorem 1.3]{DDKMMV}.
\begin{thm}\label{p2}
For any $m\in\mathbb{N}$, there exists a unique increasing sequence of positive integers $(c_1, c_2, \ldots, c_k)$ such that $c_1\ge n$, $c_i\ge c_{i-1}+n$ for $i = 2, 3, \ldots, k$, and $m = \sum_{i=1}^kF_{n,c_i}$. 
\end{thm}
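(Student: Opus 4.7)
My plan is to prove Theorem \ref{p2} by a standard greedy-algorithm argument, mirroring the classical proof of Zeckendorf's theorem but calibrated to the $n$-gap constraint. The proof splits into existence and uniqueness, both driven by strong induction and one auxiliary identity about maximal valid sums.

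For existence, I will induct on $m$. Given $m \geq 1$, let $c$ be the largest index with $c \geq n$ and $F_{n,c} \leq m$, so that $F_{n,c+1} > m$. The key computation is
\[
m - F_{n,c} \;<\; F_{n,c+1} - F_{n,c} \;=\; F_{n,c+1-n},
\]
where the equality is just the defining recurrence $F_{n,c+1} = F_{n,c} + F_{n,c+1-n}$. If $m = F_{n,c}$ we are done; otherwise $m' := m - F_{n,c}$ is a smaller positive integer, so by the inductive hypothesis it has an $n$-decomposition whose indices are all at most $c-n$ (since any index $c'$ appearing in the decomposition satisfies $F_{n,c'} \leq m' < F_{n, c+1-n}$, forcing $c' \leq c-n$). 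Prepending $c$ yields a valid decomposition of $m$, and the base case $m=1$ is immediate.

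For uniqueness, the main tool is the following lemma, proved by induction on $c \geq n-1$:
\[
\max\Bigl\{\, \textstyle\sum_{i=1}^k F_{n,c_i} \;:\; n \leq c_1,\ c_i + n \leq c_{i+1},\ c_k \leq c \Bigr\} \;=\; F_{n,c+1} - 1.
\]
The base case $c = n-1$ gives the empty sum $0 = F_{n,n}-1$. For the step, a maximizing decomposition with largest index $\leq c$ either omits index $c$ entirely (handled by induction on $c-1$) or uses $c$, in which case the remaining terms form a valid decomposition with indices $\leq c-n$ and by induction sum to at most $F_{n,c-n+1}-1$, giving total $F_{n,c} + F_{n,c-n+1} - 1 = F_{n,c+1} - 1$, again by the recurrence. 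Now suppose $m$ had two $n$-decompositions with largest indices $c \leq c'$; if $c < c'$ then by the lemma the first sum is at most $F_{n,c+1} - 1 < F_{n,c'} \leq m$, a contradiction. So the largest indices agree, and subtracting that common term reduces to a smaller instance, completing the induction.

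The argument is routine once the recurrence $F_{n,c+1} = F_{n,c} + F_{n,c+1-n}$ is used twice in the right place, so the only mild obstacle is bookkeeping the index ranges (particularly the lower cutoff $c \geq n$) carefully in the lemma's induction; there are no serious technical issues beyond that.
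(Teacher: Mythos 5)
Your proof is correct, but note that the paper itself does not prove Theorem \ref{p2} at all: it simply asserts that the statement ``follows immediately from the proof of \cite[Theorem 1.3]{DDKMMV}.'' That cited result concerns general $f$-decompositions (here the relevant case is the constant function $f \equiv n-1$, encoding the gap-$n$ condition), so the paper outsources both existence and uniqueness to that framework. What you supply instead is a self-contained, elementary argument: greedy choice plus the single recurrence $F_{n,c+1}=F_{n,c}+F_{n,c+1-n}$ for existence, and the maximal-sum identity $\max\{\sum F_{n,c_i} : c_k\le c\} = F_{n,c+1}-1$ for uniqueness. Both halves check out: the strict monotonicity of $F_{n,\cdot}$ on indices $\ge n$ (since $F_{n,m+1}-F_{n,m}=F_{n,m+1-n}\ge 1$) justifies the deduction $c'\le c-n$ in the existence step, and the degenerate range $n\le c\le 2n-2$ in your lemma's inductive step is harmless because there the residual maximum is the empty sum $0=F_{n,c-n+1}-1$. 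The trade-off is the usual one: the paper's citation buys generality (the $f$-decomposition machinery covers many recurrences at once), while your argument is shorter, fully explicit for this sequence, and makes the paper self-contained.
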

For conciseness, we call the decomposition in Theorem \ref{p2} the $n$-decomposition. In \cite{MG2}, 
Griffiths made a connection between the golden string and the Zeckendorf decomposition. In particular, the golden string $S_{\infty} = a_2a_1a_2a_2a_1a_2a_1a_2a_2a_1a_2a_2a_1\ldots$ is defined to be the infinite string of $a_1$ and $a_2$ constructed recursively as follows. Let $S_1 = a_1$ and $S_2 = a_2$, and for $m\ge 3$, $S_m$ is defined to be the concatenation of the strings $S_{m-1}$ and $S_{m-2}$, which we denote by $S_{m-1}\circ S_{m-2}$. Thus, 
\begin{align*}
    S_3 &\ =\ S_2\circ S_1 \ =\ a_2\circ a_1 \ =\ a_2a_1,\\
    S_4 &\ =\ S_3\circ S_2 \ =\ a_2a_1\circ a_2\ =\ a_2a_1a_2,
\end{align*}
and so on. We generalize the golden string in an obvious way. Given $n\in\mathbb{N}_{\ge 2}$, we let 
\begin{align*}
    S_1 &\ =\ a_1, \ldots, S_n \ =\ a_n,\\
    S_{m} &\ =\ S_{m-1} \circ S_{m-n} \mbox{ for }m\ge n+1,
\end{align*}
We call $S_\infty$ obtained from the recursive process the $n$-string. For example, when $n = 2$, we have the golden string; when $n = 3$, we have the $3$-string: $$a_3a_1a_2a_3a_3a_1a_3a_1a_2a_3a_1a_2a_3a_3\ldots .$$
Lemmas 3.1 and 3.2 in \cite{MG2} show that the Zeckendorf decomposition ($2$-decomposition) is linked to the golden string ($2$-string). This fact might lead us to suspect that in general, the $n$-decomposition is linked to the $n$-string. Our first three theorems show that the suspicion is indeed well-founded. 

\begin{thm}\label{m1}
Let $n\ge 3$ and $m\ge 1$. The following items hold.
\begin{itemize}
    \item [(1)] $S_m$ contains $F_{n,m}$ letters, of which 
    \begin{align*}
    F_{n,m-(n-1)} &\mbox{ are } a_n\mbox{'s},\\
    F_{n,m-n} &\mbox{ are } a_1\mbox{'s},\\
    F_{n,m-(n+1)} &\mbox{ are } a_2\mbox{'s},\\
    \vdots\\
    F_{n,m-(2n-2)} &\mbox{ are } a_{n-1}\mbox{'s}.\\
    \end{align*}
    \item [(2)] For any $m\in\mathbb{N}_{\ge 1}$, the concatenation
    $$S_{n+(n-1)m}\circ \cdots\circ S_{n+(n-1)}\circ S_n$$ gives the first $F_{n,n}+F_{n,2n-1}+\cdots+F_{n,(m+1)n-m}$ letters of $S_\infty$. 
\end{itemize}
\end{thm}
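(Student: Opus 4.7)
The plan is to prove both items by induction on $m$, with Part (2) being the substantive one. For Part (1), I would run induction on $m$ with the $n$ base cases $m=n,n+1,\ldots,2n-1$, during which $S_m$ unrolls to $a_n a_1 a_2 \cdots a_{m-n}$ and the claimed counts reduce to the identities $F_{n,j}=1$ for $1 \le j \le n$ and $F_{n,j}=0$ for $j \le 0$. For $m \ge 2n$, the decomposition $S_m = S_{m-1} \circ S_{m-n}$ lets me add the letter counts block by block; since both $m-1$ and $m-n$ are at least $n$, the inductive hypothesis applies to each summand, and the two contributions collapse via the defining recurrence $F_{n,\ell+1} = F_{n,\ell} + F_{n,\ell+1-n}$ to the desired count. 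The same recurrence gives the total length $F_{n,m-1}+F_{n,m-n} = F_{n,m}$.

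For Part (2), set $\beta_j := n+(n-1)j$ and $T_j := S_{\beta_j} \circ S_{\beta_{j-1}} \circ \cdots \circ S_{\beta_0}$; the goal is that $T_m$ is a prefix of $S_\infty$. My plan is to prove the sharper statement that $T_m$ is a prefix of $S_{\beta_{m+1}}$ by strong induction on $m \ge 0$, which suffices because $S_{\beta_{m+1}}$ is itself a prefix of $S_\infty$. The main technical tool is the iterated recurrence
\[
S_M \;=\; S_\ell \circ S_{\ell+1-n} \circ S_{\ell+2-n} \circ \cdots \circ S_{M-n}, \qquad n \le \ell \le M,
\]
which follows by a straightforward induction on $M-\ell$ from repeatedly expanding the leftmost term via $S_{N+1} = S_N \circ S_{N+1-n}$. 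The base case $m=0$ is immediate because every $S_M$ with $M \ge n$ begins with $S_n$, and the case $m=1$ follows by applying the iterated recurrence with $M=\beta_2$ and $\ell=\beta_1$: it shows $S_{\beta_2}$ begins with $S_{\beta_1} \circ S_{\beta_0} = T_1$.

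For the inductive step with $m \ge 2$, applying the iterated recurrence with $M = \beta_{m+1}$ and $\ell = \beta_m$ yields
\[
S_{\beta_{m+1}} \;=\; S_{\beta_m} \circ S_{\beta_{m-1}} \circ S_{\beta_{m-1}+1} \circ \cdots \circ S_{\beta_m - 1},
\]
whose first two blocks already match the start of $T_m$. The third block factors as $S_{\beta_{m-1}+1} = S_{\beta_{m-1}} \circ S_{\beta_{m-2}}$, so it begins with $S_{\beta_{m-1}}$; by the induction hypothesis applied at index $m-2$, $T_{m-2}$ is a prefix of $S_{\beta_{m-1}}$, and hence of $S_{\beta_{m-1}+1}$. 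Concatenating, $S_{\beta_{m+1}}$ begins with $S_{\beta_m} \circ S_{\beta_{m-1}} \circ T_{m-2} = T_m$, closing the induction.

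The main obstacle, and the reason strong induction is needed, is that after $S_{\beta_m} \circ S_{\beta_{m-1}}$ the natural unfolding of $S_{\beta_{m+1}}$ produces the block $S_{\beta_{m-1}+1}$ rather than the $S_{\beta_{m-2}}$ one wants. The argument is rescued by noticing that $S_{\beta_{m-1}+1}$ begins with $S_{\beta_{m-1}}$, which by the inductive hypothesis invoked two steps back already begins with $T_{m-2}$; this jump from $m-1$ to $m-2$ in the hypothesis is what makes the induction close.
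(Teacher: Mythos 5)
Your proof is correct, and Part (2) takes a genuinely different route from the paper's. For Part (1) your argument coincides with the paper's (induction via $S_m=S_{m-1}\circ S_{m-n}$ and the defining recurrence), except that you begin the base cases at $m=n$ while the theorem is asserted for all $m\ge 1$: the cases $1\le m\le n-1$, where $S_m=a_m$ and the claimed counts are all $0$ except for a single $1$ coming from $F_{n,1-n}=1$ (Remark \ref{left}), are never reached by your induction and must be checked separately, as the paper does --- trivial, but strictly part of the claim. For Part (2), the paper proceeds through Lemma \ref{l1}, items (2)--(4): it shows the concatenation $T_m$ can be \emph{completed on the right} by an auxiliary string $S^*$ so that $T_m\circ S^*$ equals the exact prefix $S_{n+(n-1)m+2}$, from which prefix-ness follows. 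You instead \emph{expand the target}: the iterated recurrence $S_M=S_\ell\circ S_{\ell+1-n}\circ\cdots\circ S_{M-n}$ with $M=\beta_{m+1}$, $\ell=\beta_m$ writes $S_{\beta_{m+1}}=S_{\beta_m}\circ S_{\beta_{m-1}}\circ S_{\beta_{m-1}+1}\circ\cdots\circ S_{\beta_m-1}$, and you match $T_m=S_{\beta_m}\circ S_{\beta_{m-1}}\circ T_{m-2}$ against this block by block, invoking the hypothesis at $m-2$ together with the observation that $S_{\beta_{m-1}+1}$ begins with $S_{\beta_{m-1}}$. Both arguments turn on the same identity $\beta_j+1-n=\beta_{j-1}$ and both are strong inductions stepping back by two, but yours dispenses with the existential bookkeeping of $S^*$ and works purely with prefixes, which is cleaner; the paper's version yields the slightly stronger exact identity $T_m\circ S^*=S_{n+(n-1)m+2}$, which it does not need elsewhere. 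One small finishing touch: to obtain the stated count of letters you should say explicitly that $|T_m|=\sum_{j=0}^m F_{n,\beta_j}$ by Part (1) and that $\beta_j=(j+1)n-j$, which is also how the paper concludes.
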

\begin{thm}\label{m5}
Let $n\ge 3$ and $m\ge 1$. Let $F_{n,c_1}+F_{n,c_2}+\cdots+F_{n,c_k}$ be the $n$-decomposition of $m\in \mathbb{N}$. Then $S_{c_k}\circ S_{c_{k-1}}\circ \cdots\circ S_{c_1}$ gives the first $m$ letters of $S_\infty$.
\end{thm}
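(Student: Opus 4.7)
The plan is to induct on $k$, the number of summands in the $n$-decomposition of $m$. The starting point is the observation that for every $c \ge n$, the recursion $S_{c+1} = S_c \circ S_{c+1-n}$ makes $S_c$ a prefix of $S_{c+1}$, so by iteration $S_c$ is a prefix of $S_\infty$. The base case $k=1$ is then immediate, since $S_{c_1}$ has length $F_{n,c_1} = m$ and $c_1 \ge n$.

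For the inductive step the main tool is the following extension claim: if $c \ge 2n-1$ and $0 \le \ell \le F_{n,c+1-n}$, and if $w$ denotes the prefix of $S_\infty$ of length $\ell$, then the prefix of $S_\infty$ of length $F_{n,c}+\ell$ equals $S_c \circ w$. Indeed, the prefix of $S_\infty$ of length $F_{n,c+1}$ is $S_{c+1} = S_c \circ S_{c+1-n}$, and since $c+1-n \ge n$, the word $S_{c+1-n}$ is itself a prefix of $S_\infty$, so its first $\ell$ letters agree with $w$.

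I would then apply the extension claim with $c=c_k$ and $\ell = F_{n,c_1}+\cdots+F_{n,c_{k-1}}$. For $k \ge 2$, Theorem \ref{p2} forces $c_k \ge c_1 + (k-1)n \ge 2n$, so the hypothesis $c \ge 2n-1$ is met. The inductive hypothesis identifies $w$ with $S_{c_{k-1}} \circ \cdots \circ S_{c_1}$, and the claim then yields $S_{c_k}\circ S_{c_{k-1}}\circ\cdots\circ S_{c_1}$ as the first $m = F_{n,c_k}+\ell$ letters of $S_\infty$, as required.

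The one remaining technical point, which I expect to be the main (though mild) obstacle, is to verify $\ell \le F_{n,c_k+1-n}$. Since $c_{k-1}\le c_k-n$ gives $c_{k-1}+1 \le c_k+1-n$ and hence $F_{n,c_{k-1}+1}\le F_{n,c_k+1-n}$, it suffices to establish the standard bound $F_{n,c_1}+\cdots+F_{n,c_{k-1}} < F_{n,c_{k-1}+1}$. A short secondary induction using $F_{n,c_{k-1}+1} = F_{n,c_{k-1}} + F_{n,c_{k-1}+1-n}$ does the job: removing $F_{n,c_{k-1}}$ leaves a sum whose top index is $\le c_{k-1}-n$, which is bounded by $F_{n,c_{k-1}+1-n}-1$ by the inductive form of the same inequality.
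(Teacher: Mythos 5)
Your proof is correct, but it takes a genuinely different route from the paper's. The paper also inducts on $k$, but its inductive step is routed through Theorem \ref{m1} item (2) and the structural facts of Lemma \ref{l1} about the special concatenations $S_{n+(n-1)m}\circ\cdots\circ S_{n+(n-1)}\circ S_n$: it argues that the prefix produced by the inductive hypothesis sits inside a suitable concatenation $S_{c_{k}-1}\circ S_{c_{k}-2}\circ\cdots$ which is itself a prefix of $S_\infty$. You instead isolate a clean prefix-extension lemma --- for $c\ge 2n-1$ and $0\le \ell\le F_{n,c+1-n}$, the prefix of $S_\infty$ of length $F_{n,c}+\ell$ is $S_c\circ w$ with $w$ the length-$\ell$ prefix --- which follows directly from $S_{c+1}=S_c\circc S_{c+1-n}$ and the fact that $S_{c+1-n}$ is a prefix of $S_\infty$ once $c+1-n\ge n$ (correctly noting that $S_i=a_i$ for $i<n$ is \emph{not} a prefix, which is why the threshold $c\ge 2n-1$ matters). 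The price you pay is the quantitative bound $F_{n,c_1}+\cdots+F_{n,c_{k-1}}<F_{n,c_{k-1}+1}\le F_{n,c_k+1-n}$, which you establish by the standard secondary induction using the spacing $c_i\ge c_{i-1}+n$ from Theorem \ref{p2}. What your approach buys is self-containedness and precision: it avoids any appeal to Theorem \ref{m1}(2) (whose role in the paper's inductive step is stated rather tersely) and replaces it with an explicit length estimate, making the verification that the shorter prefix really embeds correctly after $S_{c_k}$ completely transparent. What the paper's approach buys is economy within its own framework, since Lemma \ref{l1} and Theorem \ref{m1}(2) are already developed for other purposes.

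(One typographical note: the displayed identity should read $S_{c+1}=S_c\circ S_{c+1-n}$; otherwise your argument is complete.)
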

\begin{rek}\label{left}\normalfont
In order that Theorem \ref{m1} item (1) makes sense, we need to extend the sequence $F_{n,m}$ to the left while following the recursive relation. It is straightforward that for $n\ge 3$, we have $F_{n,0}=\cdots = F_{n,2-n}= 0$, $F_{n,1-n} = 1$, and $F_{-n} = \cdots = F_{3-2n} = 0$. 
\end{rek}
For each $m\ge 1$, let $N_{a_i}(m)$ denote the number of $a_i$ in the string $S_\infty$ up to $m$. 
\begin{thm}
Let $n\ge 3$ and $m\ge 1$. If $m = F_{n,c_1}+F_{n,c_2} + \cdots+F_{n,c_k}$ is an $n$-decomposition of $m$, then 
$$ N_{a_i}(m) \ =\ \begin{cases} F_{n, c_1-(n+i-1)} + F_{n, c_2-(n+i-1)} + \cdots + F_{n, c_k-(n+i-1)}, &\mbox{ if } 1\le i\le n-1;\\
    F_{n, c_1-(n-1)} + F_{n, c_2-(n-1)} + \cdots + F_{n, c_k-(n-1)}, &\mbox{ if } i = n.
\end{cases}$$
\end{thm}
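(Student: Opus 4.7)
The plan is to derive this final statement as a direct corollary of Theorems \ref{m1} and \ref{m5}. The key observation is that Theorem \ref{m5} reduces counting letters in the first $m$ positions of $S_\infty$ to counting letters inside a disjoint concatenation of blocks $S_{c_1}, \ldots, S_{c_k}$, and Theorem \ref{m1}(1) already tells us exactly how many of each letter appear in a single block $S_c$.

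More precisely, first I would invoke Theorem \ref{m5} to write the first $m$ letters of $S_\infty$ as $S_{c_k}\circ S_{c_{k-1}}\circ \cdots\circ S_{c_1}$. Since concatenation preserves letter counts additively, this yields
\[
N_{a_i}(m) \;=\; \sum_{j=1}^{k}\bigl(\text{number of }a_i\text{'s in }S_{c_j}\bigr).
\]
Next I would apply Theorem \ref{m1}(1) term by term. That theorem states that in a single $S_c$, the letter $a_n$ appears $F_{n,c-(n-1)}$ times, while for $1\le i\le n-1$ the letter $a_i$ appears $F_{n,c-(n+i-1)}$ times (one verifies this against the list in Theorem \ref{m1}(1): $a_1$ gives $F_{n,c-n}$, $a_2$ gives $F_{n,c-(n+1)}$, and in general $a_i$ gives $F_{n,c-(n+i-1)}$, with the extreme case $i=n-1$ producing $F_{n,c-(2n-2)}$ as listed). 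Substituting these counts into the sum above immediately yields the two branches of the claimed piecewise formula.

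The only subtlety, and the only place where care is needed, is that some of the $c_j$ may be small enough that $c_j-(n+i-1)$ or $c_j-(n-1)$ is nonpositive; this is precisely why Remark \ref{left} extends $F_{n,m}$ to the left, and one should note that Theorem \ref{m1}(1) has already been stated for all $m\ge 1$, so the formulas make sense via the conventions $F_{n,0}=\cdots=F_{n,2-n}=0$, $F_{n,1-n}=1$, etc. Beyond this bookkeeping there is no real obstacle; the statement reduces to a one-line combination of the two preceding theorems, and writing it up amounts to verifying that the index $n+i-1$ (respectively $n-1$) in the final formula matches the offsets given in Theorem \ref{m1}(1).
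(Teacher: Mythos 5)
Your proposal is correct: the paper in fact gives no explicit proof of this theorem, treating it as an immediate consequence of Theorems \ref{m1} and \ref{m5}, which is exactly the derivation you supply (additivity of letter counts over the concatenation $S_{c_k}\circ\cdots\circ S_{c_1}$ plus the per-block counts from Theorem \ref{m1}(1)). Your index bookkeeping and the remark about the leftward extension of $F_{n,m}$ are both accurate, so nothing further is needed.
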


Our final result extends \cite[Theorem 3.4]{MG1}, which describes the set of all positive integers having the summand $F_{k}$ in their Zeckendorf decomposition. The following theorem sheds another light on the relationship between the $n$-string and the $n$-decomposition. 

\begin{thm}\label{m2}
For $k\ge n\ge 3$, the set of all positive integers having the summand $F_{n,k}$ in their $n$-decomposition is given by 
\begin{align*}
    Z_{n}(k) \ =\ \left\{j + F_{n,k}+\sum_{i=1}^n F_{n, k+i}\cdot N_{a_i}(m): 0\le j\le F_{n,k-(n-1)}-1 \mbox{ and }m\ge 0\right\}.
\end{align*}
\end{thm}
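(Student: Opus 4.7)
The plan is to split every $M\in Z_n(k)$ uniquely as $M=l+F_{n,k}+u$, where $l$ collects the summands of the $n$-decomposition with index below $k$ and $u$ those with index above, and then to match $l$ with the parameter $j$ and $u$ with the ``$m$''-sum in the claim. By Theorem \ref{p2}, an integer has all its $n$-decomposition indices $\le k-n$ if and only if it lies in $[0,F_{n,k-(n-1)}-1]$ (a top index $\ge k-n+1$ already forces the value to be $\ge F_{n,k-(n-1)}$), so $l$ ranges precisely over the parameter $j$. A valid upper part $u$ is a sum $F_{n,k+d_1}+\cdots+F_{n,k+d_s}$ with $d_1\ge n$ and $d_{t+1}\ge d_t+n$ (allowing $u=0$). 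Gluing any legal lower part, $F_{n,k}$, and any legal upper part yields a legitimate $n$-decomposition containing $F_{n,k}$; by uniqueness, $M\in Z_n(k)$ is in bijection with such triples $(l,k,u)$.

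It remains to prove that the valid upper parts are exactly $\{L(m):m\ge 0\}$, where $L(m):=\sum_{i=1}^n F_{n,k+i}N_{a_i}(m)$. Write $m=F_{n,c_1}+\cdots+F_{n,c_s}$ in its $n$-decomposition; by Theorem \ref{m5}, the first $m$ letters of $S_\infty$ equal $S_{c_s}\circ\cdots\circ S_{c_1}$, hence $N_{a_i}(m)=\sum_{j=1}^s N_{a_i}(S_{c_j})$, so $L(m)=\sum_{j=1}^s A_{c_j}$, where $A_c:=\sum_{i=1}^n F_{n,k+i}N_{a_i}(S_c)$.

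The crucial identity is $A_c=F_{n,k+c}$ for all $c\ge 1$. Both sides satisfy the recurrence $X_c=X_{c-1}+X_{c-n}$ for $c\ge n+1$: the left because $S_c=S_{c-1}\circ S_{c-n}$ forces $N_{a_i}(S_c)=N_{a_i}(S_{c-1})+N_{a_i}(S_{c-n})$, the right by the defining relation of $F_{n,\cdot}$ (valid since $k\ge n$). On the base range $1\le c\le n$, $S_c=a_c$ gives $N_{a_i}(S_c)=\delta_{ic}$, so $A_c=F_{n,k+c}$ directly, and induction completes the identity. Substituting back, $L(m)=\sum_{j=1}^s F_{n,k+c_j}$; since $c_1\ge n$ and $c_{j+1}\ge c_j+n$, the indices $k+c_j$ assemble into a legitimate upper part. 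Conversely, any upper part $\sum_j F_{n,k+d_j}$ equals $L(m)$ for $m:=\sum_j F_{n,d_j}$ (Theorem \ref{p2} recognizes this as the $n$-decomposition of $m$). Combined with the first paragraph, this yields the description of $Z_n(k)$ stated in the theorem. The main obstacle I expect is cleanly proving the identity $A_c=F_{n,k+c}$, in particular checking the base cases $c=1,\ldots,n$ against the leftward extension of Remark \ref{left} so that the recurrence step is rigorous on the nose.
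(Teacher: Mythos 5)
Your argument is correct, but it reaches the theorem by a genuinely different route than the paper. The paper's proof rests on the machinery of Section 3: it orders the elements of $\mathcal{X}_{n,k}$ into the sequence $q(1)<q(2)<\cdots$ (Table 1, Lemmas \ref{l2}, \ref{l3}, \ref{l20}, \ref{k1}) and proves in Lemma \ref{k100} that the consecutive gap $q(j+1)-q(j)$ equals $F_{n,k+i}$ exactly when the $j$th letter of $S_\infty$ is $a_i$; telescoping then gives $\mathcal{X}_{n,k}=\{F_{n,k}+\sum_i F_{n,k+i}N_{a_i}(m)\}$, and the lower parts $j$ are appended at the end just as you do. You instead bypass the ordering entirely: you split each element into lower part, $F_{n,k}$, and upper part, put the upper parts in bijection with $n$-decompositions shifted by $k$, and collapse the sum via the identity $\sum_{i=1}^n F_{n,k+i}N_{a_i}(S_c)=F_{n,k+c}$, which follows by induction because both sides satisfy $X_c=X_{c-1}+X_{c-n}$ with matching initial values on $1\le c\le n$ (the recurrence on the left coming from $S_c=S_{c-1}\circ S_{c-n}$, and Theorem \ref{m5} converting $N_{a_i}(m)$ into a sum over the blocks $S_{c_j}$). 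Your route is shorter and only invokes Theorems \ref{p2} and \ref{m5}, proving exactly the set equality the theorem asserts; the paper's route is heavier but yields strictly more, namely that the enumeration $m\mapsto F_{n,k}+\sum_i F_{n,k+i}N_{a_i}(m)$ lists $\mathcal{X}_{n,k}$ in increasing order and that the gap structure of $\mathcal{X}_{n,k}$ is governed letter-by-letter by $S_\infty$ (Lemma \ref{k100}), which is of independent interest and parallels Griffiths' original argument. The one step you flag as delicate, the base cases of $A_c=F_{n,k+c}$, is in fact unproblematic: for $1\le c\le n$ you only need $S_c=a_c$, and the recurrence $F_{n,k+c}=F_{n,k+c-1}+F_{n,k+c-n}$ is valid for all $c\ge 1$ since $k\ge n$, so no leftward extension from Remark \ref{left} is required.
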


\begin{rek}\normalfont
In \cite[Theorem 3.4]{MG1}, $Z_{2}(k)$ has a closed form thanks to \cite[Theorem 3.3]{MG2}, which provides a neat formula for $N_{a_i}(m)$ in the case of the golden string. The formula was deduced using the Binet's formula for the Fibonacci numbers. However, the author of the present paper is unable to find such a closed form for $Z_{n}(k)$ when $n\ge 3$. Thus, Theorem \ref{m2} only gives another (not quicker) way to find $Z_n(k)$ and shows a relationship between the $n$-string and the $n$-decomposition. 
\end{rek}

As we proceed to the proof of the main theorems, a number of interesting immediate results are encountered. 
%%%%%%%%%%%%%%%%%%%%%%%%%%%%%%%%%%%%%%%%%%%%%%%%%%%%%%%%%%%%%%%%%%%%%%%%%%%%%%%%%%%%%%%%%%%%%%%%%%%%%%%%%%%%%%%%%%%%%%%%%%%%%%%%%%%%%%%%%%%%%%%%%%%%%%%%%%%%%%%%%%%%%%%%%%%%%%%%%%%%%%%%%%%%%%%%%%%%%%%%%%%%%%%%%%%%%%%%%%%%%%%%%%%%%%%%%%%%%%%%%%%%%%%%%%%%%%%%%%%%%%%%%%%%%%%%%%%%%%%%%%%%%%%%%%%%%%%%%%%%%%%%%%%%%%%%%%%%%%%%%%%%%%%%%%%%%%%%%%%%%%%%%%%%%%%%%%%%%%%%%%%%%%%%%%%%%%%%%%%%%%%%%%%%%%%%%%%%%%%%%%%%%%%%%%%%%%
\section{Relationship between the $n$-decomposition and the $n$-string}
The following lemma will be used in due course. 
\begin{lem}\label{l1} Let $n\ge 3$ and $S_\infty$ be the $n$-string. The following items hold.
\begin{itemize}
\item[(1)] Fix $i$ and $j$ such that $n\le i\le j-(n-1)$. Then $S_j\circ S_i$ gives the first $F_{n,i}+F_{n,j}$ letters of $S_\infty$.
\item[(2)] Fix $j\ge i\ge n$. There exists $S^*$ (possibly empty), a concatenation of some $S_i$'s, such that $S_i\circ S^* = S_j$.
\item[(3)] For $m\ge 2$, there exists $S^*$ (possibly empty), a concatenation of some $S_i$'s, such that 
$$(S_{n+(n-1)m}\circ \cdots \circ S_{n+(n-1)}\circ S_n)\circ S^* \ =\ S_{n+(n-1)m+1}\circ S_i$$
for some $n\le i\le n+(n-1)(m-1)+1$.
\item [(4)] For $m\ge 2$, there exists $S^*$ (possibly empty), a concatenation of some $S_i$'s, such that 
$$(S_{n+(n-1)m}\circ \cdots \circ S_{n+(n-1)}\circ S_n)\circ S^* \ =\ S_{n+(n-1)m+2}.$$
\end{itemize}
\end{lem}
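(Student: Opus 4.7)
The plan is to establish the four items in the order (2), (1), (4), (3), since each later one relies on an earlier. For item~(2) I induct on $j-i$: the base $j=i$ takes $S^*=\emptyset$, and in the step I use the defining recursion $S_j=S_{j-1}\circ S_{j-n}$, invoke the inductive hypothesis to write $S_{j-1}=S_i\circ S^{**}$, and set $S^*=S^{**}\circ S_{j-n}$.

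For item~(1), the hypothesis $n\le i\le j-(n-1)$ gives $j+1-n\ge i\ge n$, so item~(2) yields $S_{j+1-n}=S_i\circ S^{**}$ for some concatenation $S^{**}$. Substituting into $S_{j+1}=S_j\circ S_{j+1-n}$ produces $S_{j+1}=(S_j\circ S_i)\circ S^{**}$. Since each $S_{j+1}$ is a prefix of $S_\infty$ by construction, so is $S_j\circ S_i$, and its length is the desired $F_{n,j}+F_{n,i}$.

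For item~(4) I set $k_\ell=n+(n-1)\ell$ and let $T_m=S_{k_m}\circ\cdots\circ S_{k_0}$. I will induct on $m$, treating the extended base cases $m=0,1$ by hand: at $m=0$, $S_{n+2}=S_{n+1}\circ S_2=S_n\circ S_1\circ S_2$, so $S^*=S_1\circ S_2$; at $m=1$, $T_1=S_{2n-1}\circ S_n=S_{2n}$ and $S_{2n+1}=S_{2n}\circ S_{n+1}$, so $S^*=S_{n+1}$. For $m\ge 2$, two applications of the recursion give
\[
S_{k_m+2}\;=\;S_{k_m+1}\circ S_{k_m+2-n}\;=\;S_{k_m}\circ S_{k_{m-1}}\circ S_{k_{m-1}+1},
\]
which when matched against $T_m=S_{k_m}\circ S_{k_{m-1}}\circ T_{m-2}$ reduces the task to finding $S^*$ with $T_{m-2}\circ S^*=S_{k_{m-1}+1}$. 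The crucial observation is the arithmetic identity $k_{m-1}+1=k_{m-2}+n$: combining the inductive hypothesis $T_{m-2}\circ R=S_{k_{m-2}+2}$ with item~(2) applied to $j=k_{m-2}+n$ and $i=k_{m-2}+2$ (which yields $S_{k_{m-2}+n}=S_{k_{m-2}+2}\circ Q$), I may take $S^*=R\circ Q$.

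Item~(3) then drops out of item~(4): the recursion gives $S_{k_m+2}=S_{k_m+1}\circ S_{k_{m-1}+1}$, so the $S^*$ produced by item~(4) also witnesses item~(3) with the choice $i=k_{m-1}+1$, which lies in the allowed range $n\le i\le k_{m-1}+1$. The main obstacle is clearly the inductive step for item~(4); spotting the identity $k_{m-1}+1=k_{m-2}+n$ is what lets item~(2) bridge the gap between what the IH supplies ($S_{k_{m-2}+2}$) and what one actually needs ($S_{k_{m-1}+1}$), and the base cases $m=0,1$ must be established by hand because the induction descends by~$2$.
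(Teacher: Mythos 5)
Your proof is correct, and for items (1) and (2) it is essentially identical to the paper's: (2) by iterating the defining recursion $S_j=S_{j-1}\circ S_{j-n}$, and (1) by exhibiting $S_j\circ S_i$ as a prefix of $S_{j+1}$ via $S_{j+1}=S_j\circ S_{j+1-n}$ and item (2). For the last two items you reverse the paper's logical order: the paper proves (3) by a descent-by-two induction (base cases $m=2,3$, with the step for $\ell+1$ invoking the hypothesis at $\ell-1$ and then item (2) to merge $S_{k_{\ell-1}+1}\circ S_{k_{\ell-2}+1}$ into $S_{k_{\ell-1}+2}$), and then derives (4) from (2) and (3); you instead prove (4) directly by a descent-by-two induction (base cases $m=0,1$, using $S_{k_m+2}=S_{k_m}\circ S_{k_{m-1}}\circ S_{k_{m-1}+1}$ and the identity $k_{m-1}+1=k_{m-2}+n$ to bridge from the inductive hypothesis via item (2)) and read off (3) as the special case $i=k_{m-1}+1$. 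The two arguments rest on the same computation, but your packaging is slightly cleaner: the statement you induct on has no existential quantifier over $i$, which avoids the mild bookkeeping the paper needs to track the range of $i$ (and also sidesteps a small sloppiness in the paper's inductive step, where the $S^*$ supplied by the hypothesis is silently dropped from one displayed equality). All index checks in your argument ($j+1-n\ge i\ge n$ in (1), $k_{m-2}+2\ge n$ for the appeal to (2), and $n\le k_{m-1}+1\le n+(n-1)(m-1)+1$ in (3)) are valid.
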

\begin{proof}
We first prove item (1). By construction, we have $S_{j+1} = S_j\circ S_{j-(n-1)}$ and $S_{j+1}$ gives the first $F_{n,j+1}$ letters of $S_{\infty}$. Hence, $S_j\circ S_{j-(n-1)}$ gives the first $F_{n, j+1}$ letters of $S_{\infty}$. Because $n\le i\le j-(n-1)$, $S_i$ gives the first $F_{n, i}$ letters of $S_{j-(n-1)}$. Therefore, $S_{j}\circ S_{i}$ gives the first $F_{n,i}+F_{n,j}$ letters of $S_\infty$, as desired. 

To prove item (2), it suffices to show that there exists $S^*$ such that $S_i\circ S^* = S_{i+1}$. By construction and the fact that $i\ge n$, we can let $S^* = S_{i-(n-1)}$. 

Next, we prove item (3). We proceed by induction on $m$. \textit{Base cases:} for $m = 2$, we have
$$S_{n+2(n-1)}\circ S_{n+(n-1)}\circ S_n \ =\ S_{n+2(n-1)+1}\circ S_n.$$
Thus, letting $S^*$ be the empty string, we have our claim hold. For $m = 3$, we have 
$$S_{n+3(n-1)}\circ S_{n+2(n-1)}\circ S_{n+(n-1)}\circ S_n\ =\ S_{n+3(n-1)+1}\circ S_{n+(n-1)+1}.$$
Again, letting $S^*$ be the empty string, we have our claim hold. \textit{Inductive hypothesis:} suppose that our claim holds for all $2\le m\le \ell$ for some $\ell\ge 3$. We have
\begin{align*}
    &S_{n+(n-1)(\ell+1)}\circ S_{n+(n-1)\ell}\circ  S_{n+(n-1)(\ell-1)}\circ \cdots \circ S_{n+(n-1)}\circ S_n\\
    &\ =\ S_{n+(n-1)(\ell+1)+1}\circ (S_{n+(n-1)(\ell-1)}\circ \cdots \circ S_{n+(n-1)}\circ S_n)\\
    &\ =\ S_{n+(n-1)(\ell+1)+1}\circ S_{n+(n-1)(\ell-1)+1}\circ S_i \mbox{ for some } n\le i\le n+(n-1)(\ell-2)+1.
\end{align*}
The last equality is due to the inductive hypothesis. By item (2), there exists some $S'$, a concatenation of some $S_i$'s, such that $S_i\circ S' = S_{n+(n-1)(\ell-2)+1}$. Hence, 
\begin{align*}
    &(S_{n+(n-1)(\ell+1)}\circ S_{n+(n-1)\ell}\circ  S_{n+(n-1)(\ell-1)}\circ \cdots \circ S_{n+(n-1)}\circ S_n)\circ S'\\
    &\ =\ S_{n+(n-1)(\ell+1)+1}\circ S_{n+(n-1)(\ell-1)+1}\circ S_i\circ S'\\
    &\ =\ S_{n+(n-1)(\ell+1)+1}\circ S_{n+(n-1)(\ell-1)+1}\circ S_{n+(n-1)(\ell-2)+1}\\
    &\ =\ S_{n+(n-1)(\ell+1)+1}\circ S_{n+(n-1)(\ell-1)+2}.
\end{align*}
Therefore, our claim holds for $m = \ell+1$. This completes our proof of item (3).

Finally, item (4) follows immediately from items (2) and (3). 
\end{proof}
We are ready to prove Theorems \ref{m1} and \ref{m5}.
\begin{proof}[Proof of Theorem \ref{m1}]
First, we prove item (1). The first part is clear by construction. We prove the second part by induction on $m$. \textit{Base cases}: if $1\le m\le n-1$, then $S_m = a_m$ by construction. All numbers in the set $\{m-(n-1), m-n, \ldots, m-(2n-2)\}$ lie between $3-2n$ and $0$ inclusive. By Remark \ref{left}, for all $3-2n\le i\le 0$ except $i= 1-n$, we have $F_{n,i}  = 0$, while $F_{n, 1-n} = 1$. Write $F_{n, 1-n} = F_{n, m-(m+(n-1))}$ to see that our claim holds. For $m = n$, it is easy to check that our claim also holds. 

\textit{Inductive hypothesis:} suppose that our claim holds for all $1\le m\le \ell$ for some $\ell \ge n$. Because $S_{\ell+1} = S_{\ell}\circ S_{\ell+1-n}$, the number of $a_n$'s in $S_{\ell+1}$ is 
\begin{align*}&F_{n, \ell-(n-1)} + F_{n, \ell+1-n-(n-1)} \mbox{ by the inductive hypothesis},\\
&\ =\ F_{n, \ell-n+2}\ =\ F_{n, (\ell+1)-(n-1)}
\end{align*}
Similarly, the formulas for the number of $a_i$'s in $S_{\ell+1}$ are all correct. This completes our proof of item (1). 
\end{proof}

Next, we prove item (2). For $m = 1$, we have $S_{n+(n-1)}\circ S_n = S_{2n}$, which gives the first $F_{n,2n} = F_{n, n} + F_{n, 2n-1}$ letters of $S_{\infty}$. For $m \ge 2$, item (2) follows from the first part of item (1) and Lemma \ref{l1} item (4).  

\begin{proof}[Proof of Theorem \ref{m5}]
We prove by induction on $k$, the number of terms in the $n$-decomposition. \textit{Base case:} If $k = 1$, the statement of the lemma is certainly true. \textit{Inductive hypothesis:} assume that the statement is true for some $k = \ell\ge 1$. Consider the $n$-decomposition
$$m \ =\ F_{n,c_1} + F_{n,c_2} + \cdots + F_{n,c_\ell} + F_{n,c_{\ell+1}}.$$
By the inductive hypothesis, we know that
$$S_{c_\ell}\circ S_{c_{\ell-1}}\circ \cdots \circ S_{c_1}$$
gives the first $F_{n,c_1}+F_{n,c_2}+\cdots+F_{n,c_\ell}$ letters of $S_\infty$, and therefore, by Theorem \ref{m1} item (2), of 
$$S_{c_{\ell+1}-1}\circ S_{c_{\ell+1}-2}\circ \cdots \circ S_{c_{\ell+1}-\ell}.$$
Hence, we know that 
$$S_{c_{\ell+1}}\circ S_{c_\ell}\circ S_{c_{\ell-1}}\circ \cdots \circ S_{c_1}$$
gives the first $m$ letters of 
$$S_{\ell+1}\circ S_{c_{\ell+1}-1}\circ S_{c_{\ell+1}-2}\circ \cdots \circ S_{c_{\ell+1}-\ell},$$
and hence, by Theorem \ref{m1} item (2), of $S_\infty$. 
\end{proof}
%%%%%%%%%%%%%%%%%%%%%%%%%%%%%%%%%%%%%%%%%%%%%%%%%%%%%%%%%%%%%%%%%%%%%%%%%%%%%%%%%%%%%%%%%%%%%%%%%%%%%%%%%%%%%%%%%%%%%%%%%%%%%%%%%%%%%%%%%%%%%%%%%%%%%%%%%%%%%%%%%%%%%%%%%%%%%%%%%%%%%%%%%%%%%%%%%%%%%%%%%%%%%%%%%%%%%%%%%%%%%%%%%%%%%%%%%%%%%%%%%%%%%%%%%%%%%%%%%%%%%%%%%%%%%%%%%%%%%%%%%%%%%%%%%%%%%%%%%%%%%%%%%%%%%%%%%%%%%%%%%%%%%%%%%%%%%%%%%%%%%%%%%%%%%%%%%%%%%%%%%%%%%%%%%%%%%%%%%%%%%%%%%%%%%%%%%%%%%%%%%%%%%%%%%%%%%%
\section{Fixed term in the $n$-decomposition}
In this section, we fix $k\ge n\ge 3$. Our goal is to characterize the set $Z_n(k)$, the set of all positive integers having the summand $F_{n,k}$ in their $n$-decomposition.
The following lemma generalizes \cite[Lemma 2.5]{MG1}.
\begin{lem}\label{l2}
Let $m\ge 1$. For $v\ge 1$ and $1\le u\le n$, we have 
\begin{align}F_{n, m+vn+u} - (F_{n, m+vn+(u-1)}+\cdots+F_{n, m+n+(u-1)}) \ =\ F_{n, m+u}.\end{align}
\end{lem}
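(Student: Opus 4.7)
The defining recurrence can be rewritten as $F_{n,j}=F_{n,j-1}+F_{n,j-n}$ for every $j\ge n+1$, and the identity to be proved is essentially a telescoping consequence of applying this recurrence $v$ times. I would therefore proceed by induction on $v$, with the base case being a single application of the recurrence.

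\textbf{Base case $v=1$.} The claim reduces to
\[
F_{n,m+n+u}-F_{n,m+n+(u-1)} \;=\; F_{n,m+u},
\]
which follows from one application of $F_{n,j}=F_{n,j-1}+F_{n,j-n}$ with $j=m+n+u$. This is legal because $m\ge 1$ and $u\ge 1$ force $j\ge n+2>n+1$.

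\textbf{Inductive step.} Assume the identity holds for some $v\ge 1$ and all admissible $m,u$. For $v+1$, apply the recurrence once at the top index to obtain
\[
F_{n,m+(v+1)n+u} \;=\; F_{n,m+(v+1)n+(u-1)} \,+\, F_{n,m+vn+u},
\]
which is valid since $m+(v+1)n+u\ge n+2>n+1$. Substituting the inductive hypothesis for $F_{n,m+vn+u}$ yields
\[
F_{n,m+(v+1)n+u} \;=\; F_{n,m+(v+1)n+(u-1)} \,+\, \bigl(F_{n,m+vn+(u-1)}+\cdots+F_{n,m+n+(u-1)}\bigr) \,+\, F_{n,m+u},
\]
and rearranging gives exactly the claim for $v+1$. (Equivalently, one may simply telescope in one shot: apply $F_{n,m+in+u}=F_{n,m+in+(u-1)}+F_{n,m+(i-1)n+u}$ for $i=v,v-1,\dots,1$ in succession, and each application is justified because $m+in+u\ge n+1$ whenever $i\ge 1$, $m\ge 1$, $u\ge 1$.)

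\textbf{Main obstacle.} There is no real obstacle; the only care needed is verifying that every time the recurrence $F_{n,j}=F_{n,j-1}+F_{n,j-n}$ is invoked, the index satisfies $j\ge n+1$, which is immediate from the hypotheses $m\ge 1$, $v\ge 1$, $1\le u\le n$. The proof is essentially a bookkeeping exercise on the recurrence, and the entire write-up should only take a few lines.
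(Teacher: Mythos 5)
Your proof is correct and takes essentially the same approach as the paper: both arguments amount to applying the recurrence $F_{n,j}=F_{n,j-1}+F_{n,j-n}$ a total of $v$ times, the paper writing this as a telescoping chain collapsing the sum from the bottom term upward while you package the identical telescoping as an induction on $v$ (and you note the equivalence yourself). The index checks guaranteeing $j\ge n+1$ at each application are correct.
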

\begin{proof} If $v = 1$, the identity holds due to the linear recurrence relation of $\{F_{n,i}\}$. Assume that $v\ge 2$. 
We have
\begin{align*}
    &(F_{n, m+vn+(u-1)}+\cdots+F_{n, m+n+(u-1)}) + F_{n, m+u}\\
    &\ =\ (F_{n, m+vn+(u-1)}+\cdots+F_{n, m+2n+(u-1)}) + (F_{n, m+n+(u-1)} + F_{n, m+u})\\
    &\ =\ (F_{n, m+vn+(u-1)}+\cdots+F_{n, m+2n+(u-1)}) + F_{n, m+n+u}\\
    &\ \ \ \vdots\\
    &\ =\ F_{n, m+vn+(u-1)} + F_{n, m+(v-1)n+u}\ =\ F_{n, m+vn+u}. 
\end{align*}
This completes our proof. 
\end{proof}
\begin{lem}\label{l3}
Let $i$ and $j\in\mathbb{N}$. If $F_{n,u}$ and $F_{n, v}$ are the largest summands in the $n$-decompositions of $i$ and $j$, respectively, then $u>v$ implies that $i>j$. 
\end{lem}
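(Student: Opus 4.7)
The plan is to reduce the lemma to the quantitative bound: if $j\in\mathbb{N}$ has an $n$-decomposition whose largest summand is $F_{n,v}$ (so $v\ge n$), then
$$j \;\le\; F_{n,v+1}-1.$$
Granting this, the lemma follows immediately. Since $u,v\in\mathbb{Z}$ with $u>v$, we have $u\ge v+1$, and therefore
$$i \;\ge\; F_{n,u} \;\ge\; F_{n,v+1} \;>\; F_{n,v+1}-1 \;\ge\; j.$$
The comparison $F_{n,u}\ge F_{n,v+1}$ uses that $\{F_{n,m}\}_{m\ge n}$ is strictly increasing, which is a direct consequence of the recurrence: $F_{n,m+1}-F_{n,m}=F_{n,m+1-n}\ge 1$ for $m\ge n$.

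To establish the bound, I would induct on $v\ge n$. For the base case $v=n$, the only admissible decomposition with largest summand $F_{n,n}$ is $j=F_{n,n}=1$, and $F_{n,n+1}-1=(F_{n,n}+F_{n,1})-1=1$, so equality holds. For the inductive step at $v\ge n+1$, split $j=F_{n,v}+j'$, where $j'\ge 0$ collects the remaining summands. If $j'=0$, then $j=F_{n,v}\le F_{n,v}+F_{n,v+1-n}-1=F_{n,v+1}-1$ (using $F_{n,v+1-n}\ge 1$). If $j'>0$, let $F_{n,v'}$ be its largest summand; the $n$-spacing condition forces $v'\le v-n$. The inductive hypothesis and monotonicity give $j'\le F_{n,v'+1}-1\le F_{n,v+1-n}-1$, and adding $F_{n,v}$ together with the defining recurrence $F_{n,v+1}=F_{n,v}+F_{n,v+1-n}$ yields $j\le F_{n,v+1}-1$, closing the induction.

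I do not foresee a real obstacle here. The one thing to double-check carefully is edge behavior: that $F_{n,v+1-n}\ge 1$ whenever $v\ge n$ (so that the bound is nontrivial in the case $j'=0$), and that the smaller problem in the induction remains valid, i.e.\ that $v'\ge n$, which holds because $v'$ is by assumption an index appearing in an $n$-decomposition. Both points are immediate from the setup, so the argument reduces to a clean greedy-style estimate driven entirely by the recurrence $F_{n,v+1}=F_{n,v}+F_{n,v+1-n}$.
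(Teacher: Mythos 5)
Your proof is correct. The paper does not actually prove this lemma---it simply declares it obvious because the $n$-decomposition can be found by the greedy algorithm---and the bound you establish, namely $j\le F_{n,v+1}-1$ whenever the largest summand of $j$ is $F_{n,v}$, is precisely the estimate that underlies that greedy-algorithm claim, so your write-up is a valid and more complete version of the paper's intended reasoning.
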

The statement of the lemma is obvious since the $n$-decomposition of a number can be found by the greedy algorithm. 
\begin{lem}\label{l20}
For $j\ge 1$, the $(F_{n,nj})$th character of $S_\infty$ is $a_n$; the $(F_{n,nj+1})$th character of $S_\infty$ is $a_1$; $\ldots$; $(F_{n,nj+(n-1)})$th character of $S_\infty$ is $a_{n-1}$.
\end{lem}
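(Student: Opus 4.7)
The plan is to identify the $(F_{n,m})$th character of $S_\infty$ with the last character of the prefix $S_m$, and then determine that last character by a short induction on $m$.

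First I would observe that for every $m \ge n$, the construction $S_{m+1} = S_m \circ S_{m+1-n}$ makes $S_m$ a prefix of $S_{m+1}$, and hence of $S_\infty$. By Theorem \ref{m1}(1), $|S_m| = F_{n,m}$, so the $(F_{n,m})$th character of $S_\infty$ is precisely the final character of $S_m$. Applied with $m = nj, nj+1, \ldots, nj+(n-1)$, the lemma reduces to computing the last character of each of these $S_m$'s.

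Next I would prove the following by induction on $m$: for $m \ge n$, the last character of $S_m$ is $a_n$ when $n \mid m$ and $a_r$ otherwise, where $r \in \{1,\ldots,n-1\}$ is the residue of $m$ modulo $n$. For the base cases $m \in \{n, n+1, \ldots, 2n-1\}$: $S_n = a_n$ by definition, while for $1 \le i \le n-1$ the identity $S_{n+i} = S_{n+i-1} \circ S_i = S_{n+i-1}\circ a_i$ (valid since $S_i = a_i$ for $i \le n$) shows that $S_{n+i}$ ends in $a_i$. For $m \ge 2n$, the relation $S_m = S_{m-1}\circ S_{m-n}$ shows the last character of $S_m$ equals the last character of $S_{m-n}$; since $m - n \ge n$ and $m - n \equiv m \pmod n$, the inductive hypothesis applies and gives exactly the claimed value.

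Combining the two observations proves the lemma: the $(F_{n,nj})$th character of $S_\infty$ is the last character of $S_{nj}$, namely $a_n$, and for $1 \le i \le n-1$ the $(F_{n,nj+i})$th character of $S_\infty$ is the last character of $S_{nj+i}$, namely $a_i$. I do not anticipate any real obstacle here; the only point requiring mild care is keeping straight that $S_i = a_i$ exactly for $1 \le i \le n$, which is what makes the base cases $m = n+1,\ldots, 2n-1$ work cleanly.
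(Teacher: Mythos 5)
Your proposal is correct and follows essentially the same route as the paper: both identify the $(F_{n,m})$th character of $S_\infty$ with the last letter of the prefix $S_m$ and then use the recursion $S_m = S_{m-1}\circ S_{m-n}$ to propagate the last letter with period $n$ in the index. Your write-up is somewhat more explicit about the base cases $m=n+1,\ldots,2n-1$ and about why $S_m$ is a prefix of $S_\infty$, but the underlying argument is the same induction.
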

\begin{proof}
\textit{Base case:} By construction, we know that the statement is true for $j = 1$. 

\textit{Inductive hypothesis:} suppose the statement is true for $j = m$ for some $m\ge 1$. The $(F_{n, n(m+1)})$th of $S_\infty$ is the last letter of $S_{n(m+1)} = S_{n(m+1)-1}\circ S_{nm}$. By the inductive hypothesis, the last letter of $S_{nm}$ is $a_n$ and so, the $(F_{n, n(m+1)})$th letter of $S_\infty$ is $a_n$. The proof is completed by similar arguments for the $(F_{n, n(m+1)+u})$th letter, where $1\le u\le n-1$. 
\end{proof}

Let $\mathcal{X}_{n,k}$ denote the set of all positive integers whose $n$-decompositions have $F_{n,k}$ as the smallest summand. Next, let $\mathcal{Q}_{n,k} = \{q(j)\}_{j\ge 1}$ be the strictly increasing infinite sequence that results from arranging the elements of $\mathcal{X}_{n,k}$ into ascending numerical order. Table 1 gives the ordered list of summands for each $q(j)$, where the $r$th row corresponds to the $r$th smallest element from $\mathcal{X}_{n,k}$. (Lemma \ref{l3} helps explain the ordering of rows.)

\begin{tabular}{cccccccc}
 Row  &  & &  &  &         & \\
 \hline
1& & $F_{n,k}$ &  &  &  &         &     \\
2& & $F_{n,k}$ & $F_{n, k+n}$ &  &  &     &    \\
3& &$F_{n,k}$ &  & $F_{n,k+n+1}$ &  &    &     \\
  & & &  \vdots &  &         & \\
$(n+1)$& &$F_{n,k}$ &  &   & $F_{n,k+2n-1}$  &   &     \\
$(n+2)$& & $F_{n,k}$ &  &  &  & $F_{n, k+2n}$  &    \\
$(n+3)$& &$F_{n,k}$ &  $F_{n, k+n}$ &  &  &  $F_{n, k+2n}$ &   \\
$(n+4)$& &$F_{n,k}$ &  &  &  &    &  $F_{n, k+2n+1}$   \\
$(n+5)$& &$F_{n,k}$ & $F_{n, k+n}$  &  &  &    &  $F_{n, k+2n+1}$   \\
$(n+6)$& &$F_{n,k}$ &  & $F_{n, k+n+1}$ &  &    &  $F_{n, k+2n+1}$\\
  &  & & \vdots &  &         & 
\end{tabular}

\begin{center}
Table 1. The $n$-decompositions, in numerical order, of the positive integers having $F_{n,k}$ as their smallest summand.  
\end{center}

\begin{lem}\label{k1}
For $j\ge n$, the rows of Table 1 for which $F_{n,k+j}$ is the largest summand are those numbered from $F_{n,j}+1$ to $F_{n,j+1}$ inclusive. 
\end{lem}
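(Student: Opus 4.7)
The plan is to count the rows of Table 1 whose largest summand is exactly $F_{n,k+j}$, and then invoke Lemma \ref{l3}, which forces all rows with a common largest-summand index to form a contiguous numerical block (since the greedy decomposition is controlled by the top index).

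First I would set up a bijection between decompositions of the form $F_{n,c_1} + F_{n,c_2} + \cdots + F_{n,c_\ell}$ with $c_1 = k$, $c_\ell = k+j$ and $c_{i+1}-c_i \ge n$, and compositions of $j$ into parts of size at least $n$. The bijection sends such a decomposition to its tuple of consecutive gaps $(c_2-c_1,\ldots,c_\ell-c_{\ell-1})$. Let $C(j)$ denote the number of such compositions (with $C(0)=1$ for the empty composition and $C(j)=0$ for $0<j<n$). By splitting on whether the first part equals $n$ or exceeds $n$, I get the recurrence $C(j) = C(j-1) + C(j-n)$ for $j > n$, with $C(n)=1$. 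Comparing with the defining relation $F_{n,m+1} = F_{n,m} + F_{n,m+1-n}$, a short induction yields $C(j) = F_{n,j-n+1}$ for $j \ge n$.

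Next I would verify the cumulative identity $1 + \sum_{i=n}^{j} C(i) = F_{n,j+1}$, where the leading $1$ accounts for Row 1 (the singleton decomposition $F_{n,k}$). This is a telescoping computation using $F_{n,m+1} - F_{n,m} = F_{n,m+1-n}$ for $m \ge n$: summing from $m=n$ to $m=j$ gives
\[
F_{n,j+1} - F_{n,n} \;=\; \sum_{m=n}^{j} F_{n,m+1-n} \;=\; \sum_{i=1}^{j+1-n} F_{n,i},
\]
and since $F_{n,n}=1$ and $C(i) = F_{n,i-n+1}$, this rearranges to the desired identity.

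Finally, Lemma \ref{l3} implies that within $\mathcal{X}_{n,k}$ a smaller largest-summand index yields a smaller integer, so the rows sharing a given largest summand $F_{n,k+j}$ form a single contiguous block. The cumulative identity shows that rows whose largest summand is at most $F_{n,k+j-1}$ are numbered $1$ through $F_{n,j}$, while those with largest summand at most $F_{n,k+j}$ are numbered $1$ through $F_{n,j+1}$; subtracting identifies the block for $F_{n,k+j}$ as rows $F_{n,j}+1$ through $F_{n,j+1}$. The main obstacle I expect is not the contiguity argument (which is immediate from Lemma \ref{l3}) but verifying the composition count $C(j) = F_{n,j-n+1}$ with the correct initial conditions and managing the index shifts in the telescoping sum without an off-by-one error.
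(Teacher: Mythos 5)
Your proof is correct, and it reorganizes the argument in a way that differs genuinely from the paper's. The paper proves the lemma by induction on $j$ itself: the key step observes that the rows with largest summand exactly $F_{n,k+m+1}$ are in bijection (by deleting that top summand) with the rows whose largest summand is at most $F_{n,k+m+1-n}$, and then computes that count as $1+\sum_{j=n}^{m+1-n}(F_{n,j+1}-F_{n,j})=F_{n,m+2-n}$ by telescoping from the inductive hypothesis, finally stacking this block on top of the previous one. You instead compute the block size directly and non-inductively, via the bijection between decompositions with smallest index $k$ and largest index $k+j$ and compositions of $j$ into parts of size at least $n$, establishing $C(j)=F_{n,j-n+1}$ from the recurrence $C(j)=C(j-1)+C(j-n)$, and only then summing; the contiguity of each block comes from Lemma \ref{l3} exactly as in the paper. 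Both routes ultimately rest on the same identity $F_{n,j+1}-F_{n,j}=F_{n,j+1-n}$, but yours makes the count of rows with a prescribed largest summand explicit and self-contained (the composition model is a reusable fact in its own right), whereas the paper's induction gets the count for free from the lemma statement at earlier stages at the cost of having to handle the base cases $n\le j\le 2n-1$ separately. Your steps all check out: the gap-vector bijection, the recurrence (splitting on whether the first part is $n$ or larger), the initial conditions matching $F_{n,1}=\cdots=F_{n,n}=1$ together with the vanishing of $F_{n,0},\ldots,F_{n,2-n}$ from Remark \ref{left}, and the telescoping identity $1+\sum_{i=n}^{j}C(i)=F_{n,j+1}$ are all correct.
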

\begin{proof}
We prove by induction on $j$. \textit{Base cases:} for $n\le j\le 2n-1$, there is exactly one row with $F_{n,k+j}$ being the largest summand. In particular, the row with $F_{n,k+j}$ being the largest summand is the $(j-(n-2))th$. Hence, the base cases are done if we prove the two following claims:
\begin{itemize}
    \item [(1)] $F_{n, j+1} - F_{n, j} = 1$;
    \item [(2)] $F_{n,j} + 1 = j-(n-2)$.
\end{itemize}
For the first claim, we have $F_{n, j+1} - F_{n,j} = F_{n, j-(n-1)} = 1$ for all $n\le j\le 2n-1$. This is due to the construction of $\{F_{n, j}\}$. To prove the second claim, observe that as $j$ goes from $n$ to $2n-1$, we have $F_{n,j}$ increase from $1$ to $n$.

\textit{Inductive hypothesis:} suppose that the statement of the lemma holds for $n\le j\le m$ for some $m\ge 2n-1$. We want to show that the rows with $F_{n, k+m+1}$ being the largest summand is from $F_{n, m+1}+1$ to $F_{n, m+2}$. By the inductive hypothesis, the number of rows with the largest summand not larger than $F_{n, k+m+1-n}$ is 
\begin{align*}
    1 + \sum_{j=n}^{m+1-n}(F_{n,j+1}-F_{n,j}) \ =\ 1+ F_{n, m+2-n} - F_{n,n} \ =\ F_{n, m+2-n}, 
\end{align*}
which is also the number of rows with $F_{n, k+m+1}$ being the largest summand. 

By the inductive hypothesis, the rows with $F_{k+m}$ being the largest is from $F_{n,m}+1$ to $F_{n,m+1}$. Therefore, the rows with $F_{n, k+m+1}$ being the largest is from $F_{n,m+1}+1$ to $F_{n,m+1}+F_{n,m+2-n} = F_{n,m+2}$.
\end{proof}

\begin{lem}\label{k100}
For $j\ge 1$, we have
\begin{align}\label{k10}q(j+1) - q(j) \ =\ \begin{cases}F_{n,k+1}, &\mbox{ if the }j\mbox{th character of }S_\infty \mbox{ is } a_1;\\
F_{n,k+2}, &\mbox{ if the }j\mbox{th character of }S_\infty \mbox{ is } a_2;\\
\mbox{ }\mbox{ }\mbox{ }\vdots\\
F_{n,k+n}, &\mbox{ if the }j\mbox{th character of }S_\infty \mbox{ is } a_n.
\end{cases}\end{align}
\end{lem}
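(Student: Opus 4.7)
The plan is to prove the identity by strong induction on $j$, after first establishing a ``block structure'' for $\{q(j)\}$ that sharpens Lemma \ref{k1}. Specifically, I would show that for every $\ell \ge n$ and every $1 \le r \le F_{n,\ell-n+1}$,
\[
q(F_{n,\ell}+r) \;=\; q(r) + F_{n,k+\ell}.
\]
This follows from Lemma \ref{k1}: the rows of Table 1 whose largest summand is $F_{n,k+\ell}$ are precisely rows $F_{n,\ell}+1,\ldots,F_{n,\ell+1}$, and stripping off $F_{n,k+\ell}$ from each gives an order-preserving bijection onto the first $F_{n,\ell-n+1}$ rows of $\mathcal{Q}_{n,k}$ (those whose largest summand is at most $F_{n,k+\ell-n}$); the two collections have equal size because $F_{n,\ell+1}-F_{n,\ell} = F_{n,\ell-n+1}$. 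The base case $j=1$ of the induction is then immediate from the first two rows of Table 1 together with $S_\infty[1] = a_n$.

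For the inductive step I would split on whether $j$ sits in the interior or at the boundary of a block. In the \emph{interior case}, $j = F_{n,\ell}+r$ with $1 \le r < F_{n,\ell-n+1}$, which forces $\ell \ge 2n$. The block identity then yields $q(j+1)-q(j) = q(r+1)-q(r)$, and the inductive hypothesis evaluates this as $F_{n,k+i}$ where $S_\infty[r] = a_i$. To match with $S_\infty[j]$, note that Theorem \ref{m5} (applied with $m = F_{n,\ell+1}$) together with $S_{\ell+1} = S_\ell \circ S_{\ell-n+1}$ identifies positions $F_{n,\ell}+1,\ldots,F_{n,\ell+1}$ of $S_\infty$ with $S_{\ell-n+1}$; since $\ell \ge 2n$ gives $\ell-n+1 \ge n+1$, the string $S_{\ell-n+1}$ is itself a prefix of $S_\infty$, so $S_\infty[F_{n,\ell}+r] = S_\infty[r]$.

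In the \emph{boundary case}, $j = F_{n,\ell+1}$ for some $\ell \ge n$. Writing $\ell+1 = vn+u$ with $v \ge 1$ and $1 \le u \le n$, the block identity gives $q(j+1) = F_{n,k}+F_{n,k+\ell+1}$ and $q(j) = g(\ell-n)+F_{n,k+\ell}$, where $g(m) := q(F_{n,m+1})$ satisfies the recurrence $g(m) = g(m-n)+F_{n,k+m}$ for $m \ge n$ with base values $g(m) = F_{n,k}$ for $0 \le m \le n-1$. Unrolling $g$ completely and subtracting yields
\[
q(j+1)-q(j) \;=\; F_{n,k+vn+u} - \sum_{s=1}^{v} F_{n,k+sn+u-1},
\]
which collapses to $F_{n,k+u}$ by Lemma \ref{l2}. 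Finally, Lemma \ref{l20} identifies $S_\infty[F_{n,vn+u}]$ as $a_u$ when $1 \le u \le n-1$ and as $a_n$ when $u=n$, matching the right-hand side.

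The main obstacle is the boundary case: one must unroll the recursion for $g$ carefully and arrange the resulting sum so that Lemma \ref{l2} applies verbatim. The interior case and the block-structure preliminary are essentially bookkeeping once they are set up, though it is the block structure that makes the induction close at all.
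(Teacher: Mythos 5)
Your proposal is correct and follows essentially the same route as the paper: induction organized around the block identity $q(i+F_{n,\ell}) = q(i) + F_{n,k+\ell}$ from Lemma \ref{k1}, the self-similarity of $S_\infty$ for positions in the interior of a block, and Lemma \ref{l2} together with Lemma \ref{l20} to handle the block boundaries $j = F_{n,\ell+1}$. The only cosmetic difference is that you unroll the recursion $g(m)=g(m-n)+F_{n,k+m}$ where the paper writes out the decomposition of $q(F_{n,m})$ directly; both reduce to the same telescoping sum.
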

\begin{proof}
We prove by induction on $j$. \textit{Base cases:} consider $1\le j\le F_{n, 2n}-1 = n$. From Table 1, we know that 
\begin{align}\label{l10}q(j+1) - q(j) \ =\  \begin{cases} F_{n,k+n}, &\mbox{ if }j = 1;\\
F_{n,k+(j-1)}, &\mbox{ if } 2\le j\le n.
\end{cases}\end{align}
Using the fact that $S_\infty = a_na_1a_2\ldots a_{n-1}\ldots$ and \eqref{l10}, we have proved the base cases for $1\le j\le F_{n, 2n}-1$.

\textit{Inductive hypothesis:} Assume that the statement is true for $1\le j\le F_{n,m} - 1$ for some $m\ge 2n$. By Lemma \ref{k1}, the first $F_{n,m-n+1}$ rows of Table 1 are those for which the largest summand is no greater than $F_{n, k+m-n}$, and the rows for which $F_{n, k+m}$ is the largest summand are from $F_{n,m}+1$ to $F_{n, m+1}$. 

Due to the ordering of rows in Table 1, we have $q(i) + F_{n,k+m} = q(i+F_{n,m})$ for $1\le i\le F_{n, m-n+1}$. Hence, for $1\le i\le F_{n,m-n+1}-1$, we have 
\begin{align*}
    q(i+1+F_{n,m}) - q(i+F_{n,m}) &\ =\ (q(i+1)+F_{n,k+m}) - (q(i)+F_{n,k+m})\\
    &\ =\ q(i+1) - q(i).
\end{align*}
By the construction of $S_\infty$, the string of the first $F_{n,m-n+1}$ characters is identical to the string of characters from the $(F_{n,m}+1)$th to the $F_{n,m+1}$th inclusive. Therefore, we know that \eqref{k10} is true for $1+F_{n,m}\le j\le F_{n,m+1}-1$. It remains to show that \eqref{k10} is true for $j = F_{n,m}$. We have
\begin{align*}
    q(F_{n,m}+1) - q(F_{n,m}) \ =\ F_{n, k+m} - (F_{n,k+m-1}+F_{n, k+m-1-n}+\cdots + F_{n,k+m-1-\ell n}),
\end{align*}
where $\ell$ satisfies $n\le m-1-\ell n < 2n$. Write $m = vn + u$ for some $1\le u\le n$. It follows that $1-(u-1)/n\le v-\ell < 2-(u-1)/n$, so $v-\ell = 1$. Hence, 
\begin{align*}
    &q(F_{n,m}+1) - q(F_{n,m})\\
    &\ =\ F_{n, k+vn+u} - (F_{n,k+vn+u-1}+F_{n, k+(v-1)n+u-1}+\cdots + F_{n,k+n+u-1}) \ =\ F_{n, k+u}
\end{align*}
due to Lemma \ref{l2}. Using Lemma \ref{l20}, we conclude that \eqref{k10} is true for $j = F_{n,m}$. This completes our proof. 
\end{proof}
Finally, we prove Theorem \ref{m2}.
\begin{proof}[Proof of Theorem \ref{m2}]
By Lemma \ref{k100}, we can write
$$\mathcal{X}_{n,k} \ =\ \left\{F_{n,k} + \sum_{i=1}^n F_{n, k+i}\cdot N_{a_i}(m): m \ge 0\right \}.$$
The numbers in $\{F_{n,n}, F_{n, n+1}, \ldots, F_{n, k-n}\}$ are used to obtain the $n$-decompositions of all integers for which the largest summand is no greater than $F_{n, k-n}$. In particular, such $n$-decompositions generate all integers from $1$ to $F_{n, k-n+1}-1$ inclusive. Furthermore, such decompositions can be appended to any $n$-decomposition having $F_{n,k}$ as its smallest summand to produce another $n$-decomposition. Therefore, we know that
\begin{align*}
    Z_{n}(k) \ =\ \left\{j + F_{n,k}+\sum_{i=1}^n F_{n, k+i}\cdot N_{a_i}(m): 0\le j\le F_{n,k-(n-1)}-1 \mbox{ and }m\ge 0\right\},
\end{align*}
as desired. 
\end{proof}

\ \\
\end{document}